\newtheorem{theorem}{Theorem}[section]
\newtheorem{proposition}[theorem]{Proposition}
\newtheorem{lemma}[theorem]{Lemma}
\newcommand{\beq}{\begin{equation}}
\newcommand{\eeq}{\end{equation}}
\newcommand{\be}{\begin{enumerate}}
\newcommand{\ee}{\end{enumerate}}
\newcommand {\bua} {\begin{eqnarray*}}
\newcommand {\eua} {\end {eqnarray*}}
\newcommand{\details}[1]{}
\title{Effective strong convergence of the proximal point algorithm in CAT(0) spaces}
\author{Lauren\c tiu Leu\c stean${}^{a,b,d}$ and Andrei Sipo\c s${}^{c,d}$\\[2mm]
\footnotesize ${}^{a}$The Research Institute of the University of Bucharest (ICUB), University of Bucharest,\\
\footnotesize Bd. M. Kog\u{a}lniceanu 36-46, 050107, Bucharest, Romania\\[1mm]
\footnotesize ${}^{b}$Faculty of Mathematics and Computer Science, University of Bucharest,\\
\footnotesize  Academiei 14, 010014, Bucharest, Romania\\[1mm]
\footnotesize ${}^c$Department of Mathematics, Technische Universit\"at Darmstadt,\\
\footnotesize Schlossgartenstrasse 7, 64289 Darmstadt, Germany\\[1mm]
\footnotesize ${}^d$Simion Stoilow Institute of Mathematics of the Romanian Academy,\\
\footnotesize Calea Grivi\c tei 21, 010702 Bucharest, Romania \\[2mm]
\footnotesize E-mails: laurentiu.leustean@unibuc.ro, sipos@mathematik.tu-darmstadt.de\\
}
\date{}
\begin{document}

\maketitle

\begin{center}
{\em  Dedicated to Professor Simeon Reich on the occasion of his 70th birthday}
\end{center}

\begin{abstract}
\noindent We apply methods of proof mining to obtain uniform quantitative bounds on the strong convergence  of the proximal point algorithm  for finding minimizers of convex, lower 
semicontinuous proper functions in CAT(0) spaces. Thus, for uniformly convex functions we compute rates of convergence, while, for totally bounded CAT(0) spaces we apply methods 
introduced by Kohlenbach, the first author and Nicolae to compute rates of metastability. \\

\noindent {\em Keywords:} Proximal point algorithm; CAT(0) spaces; Proof mining;  Uniform convexity; Fej\'er monotonicity; Total boundedness; Rates of convergence; Rates of metastability.\\

\noindent  {\it Mathematics Subject Classification 2010}: 46N10, 47J25, 03F10.
\end{abstract}

\section{Introduction}

Let $X$ be a complete CAT(0) space and $f: X \to (-\infty, +\infty]$ be a convex, lower 
semicontinuous (lsc), proper function that has at least one minimizer. In analogy to the 
celebrated notion in Hilbert spaces, Jost introduced \cite{Jos95} in 1995 the {\it resolvent} 
$J_\gamma$ of $f$ of order $\gamma>0$ as
\[J_\gamma:X \to X, \quad J_\gamma(x) = {\arg\!\min}_{y \in X} \left[f(y) + \frac1{2\gamma} d^2(x,y)\right].\]
It may be shown that when $f$ satisfies the conditions specified above, the $\arg\!\min$ of 
the right hand side is well-defined, i.e. it exists and is unique.

Resolvent operators corresponding to diverse forms of objects had already been defined and applied 
to great success in the field of convex optimization in linear settings. The main tool used to exploit them, 
developed by Martinet \cite{Mar70}, Rockafellar \cite{Roc76} and Br\'ezis and Lions \cite{BreLio78}, 
was the {\it proximal point algorithm}, and for a general introduction to this field in the 
context of Hilbert spaces, putting this algorithm in its context, one may see the book of 
Bauschke and Combettes \cite{BauCom17}.

This algorithm was first proven to work in the setting of complete CAT(0) spaces by Ba\v{c}\'ak 
\cite{Bac13}, who showed some years ago its weak convergence (that is, $\Delta$-convergence). 
Specifically, in the above context, if one takes a starting point $x \in X$, and a sequence 
of step-sizes $(\gamma_n)_{n \in \mathbb{N}}$, then one may define the proximal point algorithm  
$(x_n)_{n \in \mathbb{N}}$ by setting:
\begin{center} $x_0:=x$, \quad  $x_{n+1} := J_{\gamma_n} x_n$  for any $n\in\mathbb{N}$. 
\end{center}
It is of this sequence that Ba\v{c}\'ak proved the weak convergence to a minimizer of $f$, 
assuming additionally that the series $\sum_{n=0}^\infty \gamma_n$ 
is divergent.

It is interesting to ask whether the algorithm may converge in the usual sense (i.e., strongly). 
Although this is known not to hold in general (see \cite[Corollary 5.1]{Gul91} for a 
counterexample in the context of Hilbert spaces), Ba\v{c}\'ak selected two cases where it 
can be guaranteed to happen. This is expressed by the following result (cf. \cite[Remark 1.7]{Bac13}).

\begin{theorem}\label{ppa-cat0-strong}
In the above hypotheses, assume further either that $f$ is uniformly convex or that $X$ is locally compact. 
Then ${(x_n)}$ converges strongly to a minimizer of $f$. 
\end{theorem}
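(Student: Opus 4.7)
The plan is to combine properties of the PPA sequence $(x_n)$ that hold under the general CAT(0) hypotheses---Fej\'er monotonicity with respect to $\mathrm{Argmin}(f)$, boundedness, and convergence of $(f(x_n))$ to $\inf f$---with an extra closing argument in each of the two cases. Both arguments share the same opening: I would first record that each resolvent $J_{\gamma_n}$ is nonexpansive and fixes every minimizer of $f$, so for any $p\in\mathrm{Argmin}(f)$ the sequence $(d(x_n,p))$ is nonincreasing; in particular $(x_n)$ is bounded. The variational inequality characterising $J_{\gamma_n}$ yields a Fej\'er-type estimate of the form $d^2(x_{n+1},p)\le d^2(x_n,p)+2\gamma_n(f(p)-f(x_{n+1}))$, and telescoping together with the standing assumption $\sum_n \gamma_n=\infty$ forces $f(x_n)\downarrow \inf f$.

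For the uniformly convex case, the unique minimizer $p^*$ exists. A modulus $\eta$ of uniform convexity of $f$ provides, by applying the defining inequality along the geodesic from $x$ to $p^*$ and evaluating at the midpoint, a function with $\eta(\varepsilon)>0$ for every $\varepsilon>0$ such that $d(x,p^*)\ge\varepsilon$ implies $f(x)-\inf f\ge\eta(\varepsilon)$. Since $f(x_n)-\inf f\to 0$, the contrapositive gives $d(x_n,p^*)\to 0$.

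For the locally compact case, I would invoke the Hopf--Rinow theorem: a complete locally compact geodesic space is proper, so closed bounded subsets of $X$ are compact. The bounded sequence $(x_n)$ therefore admits a convergent subsequence $x_{n_k}\to y$. Lower semicontinuity of $f$ combined with $f(x_{n_k})\to\inf f$ yields $f(y)\le\inf f$, whence $y\in\mathrm{Argmin}(f)$. Fej\'er monotonicity of $(x_n)$ with respect to $\{y\}$ then guarantees that $(d(x_n,y))$ is nonincreasing, and since a subsequence tends to zero the whole sequence does.

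The main obstacle I anticipate is the uniformly convex case: one has to fix a precise form of uniform convexity of $f$ that is compatible with the CAT(0) geometry (uniform convexity along geodesics, with a genuine modulus $\eta$), and then check that its interaction with the resolvent variational inequality really does convert the asymptotic information $f(x_n)\to\inf f$ into $d(x_n,p^*)\to 0$. The locally compact case, by contrast, is essentially a textbook application of properness plus Fej\'er monotonicity, and requires no further analytic ingredient.
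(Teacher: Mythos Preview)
Your outline is correct in both branches. Note, though, that the paper does not itself prove Theorem~\ref{ppa-cat0-strong}: it is quoted from \cite[Remark~1.7]{Bac13} as the target to be quantified. The nearest things to a proof in the paper are the arguments for Theorems~\ref{quant-thm-uc} and~\ref{main-quant-thm}, so the comparison is with those.

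For the uniformly convex case your route is slightly different from (and more direct than) the paper's. You pair $x_n$ with the unique minimizer $p^*$ and read off $d(x_n,p^*)\to 0$ straight from the midpoint inequality and $f(x_n)\to\min f$; this also identifies the limit without appeal to \cite{Bac13}. The proof of Theorem~\ref{quant-thm-uc} instead pairs $x_n$ with $x_m$, obtaining
\[
\tfrac14\,\varphi(d(x_n,x_m))\le \tfrac12 f(x_n)+\tfrac12 f(x_m)-\min f,
\]
deduces that $(x_n)$ is Cauchy, and only then cites \cite{Bac13} for the limit being a minimizer. The Cauchy formulation is what the paper needs in order to display an explicit Cauchy modulus $\Sigma_{b,\theta,\varphi}$; your argument would give the same rate once transcribed. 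Your anticipated obstacle does not materialise: the paper's definition \eqref{uconv} of uniform convexity is precisely the geodesic midpoint form you use, and no further compatibility check is needed.

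For the locally compact case your Hopf--Rinow plus Fej\'er-monotonicity argument is the standard qualitative one and is essentially Ba\v{c}\'ak's. The paper does not reproduce it; Section~\ref{meta-tot-bound} replaces local compactness by a modulus of total boundedness and runs the Kohlenbach--Leu\c stean--Nicolae machinery (uniform Fej\'er monotonicity with modulus $\chi_L$, approximate $F$-points via $\beta_{b,\theta}$, approximation $AF_k=\mathrm{Argmin}_k(f)$) to extract a rate of metastability rather than a rate of convergence, so the two arguments are not directly comparable.
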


Our goal in this paper will be to prove a quantitative version of the above Theorem 
\ref{ppa-cat0-strong}, as seen through the lens of proof mining.

Proof mining is an applied subfield of mathematical logic (see \cite{Koh08} for a 
comprehensive introduction and \cite{Koh17,Koh18} for a survey of recent results), whose aim is 
to obtain quantitative information -- primarily witnesses and bounds for existentially quantified 
variables -- out of ordinary mathematical proofs that cannot be said to be fully constructive. 
For example, in the case above, where the conclusion of the proof is a convergence statement, 
the relevant piece of quantitative information would be a formula -- that is, a function of 
low computational complexity -- yielding, for a given $\varepsilon$, the rank from which all 
terms of the sequence are closer than $\varepsilon$ to the desired limit. (In this case, a bound 
will also be a witness, by the monotonicity of the statement.) Unfortunately, fundamental 
results in computability theory lead to the impossibility of such a formula -- called a {\it rate of convergence} -- 
in relatively simple contexts (e.g. ergodic averages), and therefore the most general metatheorems 
of proof mining prohibit it from the start by restricting the logical complexity of the formulas 
that may serve as a conclusion to a proof which is under analysis, if that proof makes use of the 
law of excluded middle. A significant part of the recent research in the field has been devoted 
to finding methods of `circumventing' this fundamental limitation.

The two cases of the theorem under discussion shall be analyzed separately, as one deals 
with the problem above in different ways. The case where $f$ is uniformly convex is a 
particularly interesting one. It is known that this property immediately yields the uniqueness 
of the minimizer, and past work of Kohlenbach \cite{Koh90} and Briseid \cite{Bri09} has shown 
that in this case one may obtain convergence of the sequence in a completely constructive way, 
and hence extract a full rate of convergence out of that argument. Here, we reason more directly, 
in the sense that we can simply get out of Ba\v{c}\'ak's own argument the needed rate of convergence 
for the algorithm in this `uniform' case. This is what we do in Section~\ref{conv-unif}. 
We mention that such a rate for a quite general uniformity 
condition (though not including the case examined here) was previously obtained in the paper \cite{LeuNicSip17}.

In the second case, when $X$ is locally compact, things are not this simple, as excluded middle features prominently in 
the proof and one cannot so easily eliminate it. One then works with a related notion of {\it metastability} -- 
introduced by Tao in \cite{Tao07} and used extensively in his proof of the convergence 
theorem for multiple ergodic averages \cite{Tao08} -- a property equivalent to that of a 
sequence being Cauchy. This property is expressed as follows:
$$\forall k \in {\mathbb N} \,\forall g: {\mathbb N} \to {\mathbb N} \,\exists N \in {\mathbb N} 
\,\forall i,j \in [N, N+g(N)]\,\,\, \left(d(x_i,x_j) \leq \frac1{k+1}\right),$$
where $[N,N+g(N)]$ denotes the set $\{N,N+1,\ldots, N+g(N)\}$.
By examining the above form in conjunction with the logical metatheorems of proof mining, 
it can be seen that it fits into the required logical complexity and therefore one can extract 
from a classical proof of the statement a {\it rate of metastability}, i.e. a functional  
$\Phi:\mathbb{N}\times\mathbb{N}^\mathbb{N}\to \mathbb{N}$ satisfying 
\begin{equation}\label{def-rate-meta}
\exists N \leq \Phi(k,g)\,\forall i,j \in [N, N+g(N)]\,\,\, \left(d(x_i,x_j) \leq \frac1{k+1}\right),
\end{equation}
for all $k\in\mathbb{N}$ and all $g: {\mathbb N} \to {\mathbb N}$. Such rates, even if they are not 
full rates of convergence, may 
still be deemed useful, for example in the further analysis of proofs of well-behaved statements 
which nevertheless use the convergence result as a premise.

Now, the proof of the second case of Theorem~\ref{ppa-cat0-strong} consists essentially of an argument that crucially 
uses the property of the sequence generated by the algorithm being {\it Fej\'er monotone} with 
respect to the set of minimizers of the function. Such an argument has been analyzed before in a 
highly general form by Kohlenbach, the first author and Nicolae \cite{KohLeuNic18}, an analysis 
that has been since applied to other similar instances \cite{KohLopNic17,LeuRadSip16}, therefore 
illustrating how modular the techniques of proof mining can become. An application 
of this idea to the proximal point algorithm in CAT(0) spaces has been carried out in \cite{LeuSip17}. 
The goal of our analysis in Section~\ref{meta-tot-bound} will be to show that one may use 
a way of approximating the minimizer  set that captures more accurately the spirit of the problem in order 
to obtain, for complete totally bounded CAT(0) spaces, a rate of metastability 
for the sequence together with a precise finitization of the fact that the limit  of 
the sequence is a minimizer of the function.

We finish the introduction by recalling some notions that are used for expressing our 
quantitative results. We denote $[m,n]=\{m,m+1,\ldots,n\}$ for all $m, n\in\mathbb{N}$ with $m\leq n$. For any 
$f:\mathbb{N} \to \mathbb{N}$, we define 
$$f^M: \mathbb{N} \to \mathbb{N}, \quad f^M(n) = \max_{i \leq n} f(i).$$
Then $f^M$ is nondecreasing and  $f(n) \leq f^M(n)$ for all $n\in\mathbb{N}$.

Let $(X,d)$ be a metric space and $(a_n)$ be a convergent sequence in $X$ with $\displaystyle\lim_{n\to\infty} a_n=a$.  
A mapping $\varphi:{\mathbb N}\to{\mathbb N}$ is said to be:
\begin{enumerate}
\item[(i)] a {\it rate of convergence} of $(a_n)$ if for all $k\in\mathbb{N}$,
\[
d(a_n,a)\leq \frac1{k+1} \quad \text{ for all~}n\geq \varphi(k).
\]
\item[(ii)] a {\it Cauchy modulus} of $(a_n)$ if  for all $k\in\mathbb{N}$,
\[
d(a_m,a_n)\leq \frac1{k+1} \quad \text{ for all~}m,n\geq \varphi(k).
\]
\end{enumerate}
If $(b_n)$ is a sequence of nonnegative reals such that the series $\sum\limits_{n=0}^\infty b_n$ 
diverges, then a {\it rate of divergence}  of $\sum\limits_{n=0}^\infty b_n$ is a mapping
$\theta:{\mathbb N}\to{\mathbb N}$  satisfying
\[
\sum_{n=0}^{\theta(P)}b_n \geq P \quad \text{ for all~}P \in {\mathbb N}.
\]

\section{Preliminaries and first results}

We briefly recall some notions on geodesic spaces.  Let $(X,d)$ be a metric space. A geodesic 
in $X$ is a map $\gamma:[a,b]\to X$ (where $a,b \in \mathbb{R}$) 
satisfying 
\[
d(\gamma(s),\gamma(t))=|s-t| \quad \text{~~for all~~} s,t\in [a,b].
\]
A geodesic segment in $X$ is the image $\gamma([a,b])$ of a geodesic $\gamma:[a,b]\to X$. 
If we denote  $x:=\gamma(a)$ and $y:=\gamma(b)$, then we say that the geodesic  segment 
$\gamma([a,b])$ joins $x$ and $y$\\.  The metric space $(X,d)$ is a (uniquely) geodesic space 
if every two distinct points are joined by a  (unique) geodesic segment. One can easily see that 
$(X,d)$ is a uniquely geodesic space if and only if  for any $x,y\in X$ there exists  a  unique
 geodesic $\gamma : [0,d(x,y)] \to X$ such that $\gamma(0)=x$ and $\gamma(d(x,y))=y$. 

A CAT(0) space is a geodesic space $(X,d)$ satisfying the following:  for all $z \in X$, all geodesics 
$\gamma : [a,b] \to X$ and all $t \in [0,1]$, 
\begin{equation}
d^2(z,\gamma((1-t)a+tb)) \leq (1-t)d^2(z,\gamma(a)) + td^2(z,\gamma(b)) - t(1-t)d^2(\gamma(a),\gamma(b)). \label{def-CAT0-0}
\end{equation}
It is a well-known fact that CAT(0) spaces are uniquely geodesic. 

\mbox{}

In the sequel, $(X,d)$  is a CAT(0) space.  If $x,y\in X$ and
 $\gamma : [0,d(x,y)] \to X$ is the unique geodesic that joins them, we denote, for any $t \in [0,1]$, 
the point $\gamma(td(x,y))$ by $(1-t)x + ty$. Thus,
\[d(x,(1-t)x + ty)=td(x,y) \quad\text{and}\quad d(y,(1-t)x + ty)=(1-t)d(x,y).\]

Using this notation, it is an immediate exercise to see that \eqref{def-CAT0-0} can be written as 
follows: for all $x,y,z\in X$ and all $t\in [0,1]$, 
\begin{equation}
d^2(z,(1-t)x+ty)) \leq (1-t)d^2(z,x) + td^2(z,y) - t(1-t)d^2(x,y). \label{def-CAT0}
\end{equation}

Let $f: X \to (-\infty, +\infty]$ be a convex, lower semicontinuous (lsc), 
proper function.  We denote by $Argmin(f)$  the set of minimizers of $f$  and we assume that $Argmin(f)$ is nonempty.  
For any $\gamma>0$, the {\em resolvent} $J_\gamma$ of $f$ of order $\gamma>0$ \cite{Jos95} is defined as follows:
\[J_\gamma:X \to X, \quad J_\gamma(x) = {\arg\!\min}_{y \in X} \left[f(y) + \frac1{2\gamma} d^2(x,y)\right].\]
We also say that $J_\gamma$ is the {\it proximal mapping} of order $\gamma$. As proved in \cite{Jos95},
$J_\gamma$ is a nonexpansive mapping: for all $x,y\in X$, 
\[d(J_{\gamma}x,J_{\gamma}y)\leq d(x,y).\]

If $T:X\to X$, we denote by $Fix(T)$ the set of fixed points of $T$. The definition of the proximal mapping is motivated by the following well-known result. 

\begin{proposition}\label{fix-min}
For any $\gamma > 0$, $Fix(J_{\gamma}) =Argmin(f)$.
\end{proposition}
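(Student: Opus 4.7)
The plan is to prove each inclusion separately, with the forward inclusion being essentially immediate and the reverse one relying on the standard convex-combination trick adapted to the geodesic setting.

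For $Argmin(f) \subseteq Fix(J_\gamma)$, I would take $x \in Argmin(f)$, set $y_0 := J_\gamma(x)$, and test the defining minimization property of the resolvent against the competitor $y := x$, obtaining
\[ f(y_0) + \frac{1}{2\gamma}d^2(x,y_0) \leq f(x) + \frac{1}{2\gamma}d^2(x,x) = f(x). \]
Since $x$ is a global minimizer of $f$, one has $f(y_0) \geq f(x)$, which forces $d(x,y_0) = 0$ and hence $J_\gamma(x) = x$. The only thing to note is that $f(x) < +\infty$, which holds because $f$ is proper and $x$ minimizes it.

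For $Fix(J_\gamma) \subseteq Argmin(f)$, suppose $J_\gamma(x) = x$. By the definition of $J_\gamma$, this means
\[ f(x) \leq f(y) + \frac{1}{2\gamma}d^2(x,y) \quad \text{for every } y \in X, \]
which by itself is too weak to conclude $f(x) \leq f(y)$. The idea is to probe the inequality along the geodesic from $x$ to an arbitrary $y \in X$: for $t \in (0,1]$, let $z_t := (1-t)x + ty$. The CAT(0) notation gives $d(x,z_t) = t \, d(x,y)$, so applying the inequality at $z_t$ and then using convexity of $f$ yields
\[ f(x) \leq f(z_t) + \frac{t^2}{2\gamma} d^2(x,y) \leq (1-t)f(x) + t f(y) + \frac{t^2}{2\gamma} d^2(x,y). \]
Cancelling $(1-t)f(x)$ and dividing by $t > 0$ leaves $f(x) \leq f(y) + \frac{t}{2\gamma}d^2(x,y)$, and letting $t \to 0^+$ delivers $f(x) \leq f(y)$, so $x \in Argmin(f)$.

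There is no real obstacle: the only CAT(0)-specific ingredient is the equality $d(x,(1-t)x+ty) = t\, d(x,y)$ built into the geodesic notation, and the only extra care needed is handling the possibility $f(y) = +\infty$, for which both inequalities in play are vacuous.
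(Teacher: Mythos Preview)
Your argument is correct; both inclusions are handled cleanly, and the care you take with $f(x)<\infty$ and with the case $f(y)=+\infty$ closes the obvious edge cases. Note, however, that the paper does not actually supply a proof of this proposition: it is stated as a ``well-known result'' and left without argument, so there is nothing to compare your approach against. Your proof is the standard one and would serve perfectly well as a self-contained justification.
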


Let  $(\gamma_n)$  be a sequence in $ (0, \infty)$. The {\it proximal point algorithm} 
$(x_n)$ starting with $x\in X$ is defined as follows:
\begin{equation}
x_0 :=x, \qquad x_{n+1} := J_{\gamma_n} x_n \text{~for all~} n\in\mathbb{N}. \label{def-PPA}
\end{equation}
It is easy to see  that for all $n,l\in \mathbb{N}$ and all $q \in X$,
\begin{eqnarray}
d(x_{n+l},q) &\leq&  d(x_n, q) + \sum_{i=n}^{n+l-1} d(q, J_{\gamma_i}q) \label{dxnl-p}.
\end{eqnarray}

As a consequence, we get that 

\begin{lemma}\label{xn-Fejer}
$(x_n)$ is Fej\'er monotone with respect to $Argmin(f)$, that is
\[
d(x_{n+1},p) \leq   d(x_n, p)
\]
for all $p\in Argmin(f)$ and all $n\in \mathbb{N}$. 
\end{lemma}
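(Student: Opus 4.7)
The plan is to derive the Fejér monotonicity as a one-line consequence of the two facts just established: the fixed-point characterization in Proposition~\ref{fix-min} and the telescoping inequality~\eqref{dxnl-p}. There is essentially no obstacle here; the work has already been done in setting up the preliminaries.

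Concretely, I would fix $p \in Argmin(f)$ and $n \in \mathbb{N}$, and apply \eqref{dxnl-p} with $l = 1$ and $q = p$ to obtain
\[
d(x_{n+1}, p) \leq d(x_n, p) + d(p, J_{\gamma_n} p).
\]
By Proposition~\ref{fix-min}, $p \in Argmin(f) = Fix(J_{\gamma_n})$, so $J_{\gamma_n} p = p$ and therefore $d(p, J_{\gamma_n} p) = 0$. The inequality reduces to $d(x_{n+1}, p) \leq d(x_n, p)$, which is exactly what needs to be shown.

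Alternatively — and this is really the same argument unwound — one can avoid invoking \eqref{dxnl-p} and instead combine the nonexpansivity of $J_{\gamma_n}$ with Proposition~\ref{fix-min} directly: since $J_{\gamma_n} p = p$,
\[
d(x_{n+1}, p) = d(J_{\gamma_n} x_n, J_{\gamma_n} p) \leq d(x_n, p).
\]
Either route is immediate, so I do not expect any subtlety; the only ``content'' of the lemma lies in having identified $Argmin(f)$ with the common fixed-point set of the resolvents, which is precisely the purpose of Proposition~\ref{fix-min}.
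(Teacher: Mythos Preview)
Your proposal is correct and matches the paper's approach exactly: the paper presents the lemma with the words ``As a consequence, we get that'' immediately after \eqref{dxnl-p}, i.e., it intends precisely your first argument (take $l=1$, $q=p$, and use Proposition~\ref{fix-min} to kill the extra term). Your alternative via nonexpansivity is also fine and is, as you note, the same computation unwound.
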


We recall the following result that will be used in this paper.

\begin{proposition}\cite[Lemma 2.6.(ii)]{LeuSip17}\label{prop-limfxn-minf}$\,$\\
Let $b\in \mathbb{R}$  be such that $d(x,p)\leq b$ for some $p\in Argmin(f)$. Assume that 
$\sum_{n=0}^\infty \gamma_n = \infty$ with rate of divergence $\theta$. 

Then $\displaystyle\lim_{n \to \infty} f(x_n) = \min(f)$, with a (nondecreasing) rate of convergence
\begin{equation}
\beta_{b, \theta}:\mathbb{N}\to\mathbb{N}, \quad \beta_{b, \theta}(k)=\theta^M\left(\left\lceil \frac{b^2(k+1)}2 \right\rceil \right) +1.\label{rate-fxn}
\end{equation}
\end{proposition}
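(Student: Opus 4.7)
My plan is to derive a descent-type inequality for the proximal iteration directly from the variational definition of $J_\gamma$, telescope it against a minimizer, and then translate the resulting growth condition on the partial sums of $(\gamma_n)$ into a rate using the divergence modulus $\theta$. The central step is to prove that for every $x,y \in X$ and every $\gamma > 0$,
\[
2\gamma\bigl(f(J_\gamma x) - f(y)\bigr) \leq d^2(x,y) - d^2(x, J_\gamma x) - d^2(J_\gamma x, y).
\]
To establish this, I would compare the value of $z \mapsto f(z) + \frac{1}{2\gamma} d^2(x,z)$ at its minimizer $J_\gamma x$ with its value at the geodesic point $(1-t)J_\gamma x + ty$ for $t \in (0,1]$, then apply convexity of $f$ together with the CAT(0) inequality \eqref{def-CAT0}, divide by $t$, and let $t \to 0^+$; the negative ``$-t(1-t)d^2$'' term in \eqref{def-CAT0} is precisely what produces the $-d^2(J_\gamma x, y)$ on the right.

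Applying this with $x = x_n$, $\gamma = \gamma_n$, and $y = p \in Argmin(f)$ gives
\[
2\gamma_n\bigl(f(x_{n+1}) - \min(f)\bigr) \leq d^2(x_n, p) - d^2(x_{n+1}, p),
\]
while applying it with $y = x_n$ shows that $(f(x_n))$ is nonincreasing. Summing from $n = 0$ to $N$ telescopes the right side to $d^2(x_0, p) - d^2(x_{N+1}, p) \leq b^2$; using monotonicity of $(f(x_n))$ to pull the smallest value out of the sum on the left yields
\[
2\left(\sum_{n=0}^{N}\gamma_n\right)\bigl(f(x_{N+1}) - \min(f)\bigr) \leq b^2.
\]
Now, given $k \in \mathbb{N}$, set $P := \lceil b^2(k+1)/2 \rceil$. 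By the rate of divergence hypothesis, $\sum_{n=0}^{\theta(P)} \gamma_n \geq P$, so taking $N = \theta(P)$ gives $f(x_{\theta(P)+1}) - \min(f) \leq 1/(k+1)$, and monotonicity of $(f(x_n))$ propagates this to all $n \geq \theta(P) + 1$. Replacing $\theta$ by $\theta^M$ renders the resulting bound nondecreasing in $k$ as required by the definition of a rate of convergence, giving the stated $\beta_{b,\theta}$.

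The main obstacle is the fundamental resolvent inequality of the first step: unlike in Hilbert space, one has no inner product to exploit subgradients of the squared distance, so the argument must proceed via a one-sided variation along a geodesic, where the CAT(0) inequality is the only available substitute for parallelogram-law identities. Once that inequality is in place, the remainder of the proof is a routine telescoping combined with the Fej\'er-type monotonicity of $(f(x_n))$ and a single quantitative invocation of $\theta$.
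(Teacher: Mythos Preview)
The paper does not actually prove this proposition; it is merely recalled (with citation to \cite[Lemma~2.6.(ii)]{LeuSip17}) as a known result, so there is no in-paper argument to compare against. That said, your proof is correct and is precisely the standard route taken in the cited source and in Ba\v{c}\'ak's original paper: the key resolvent inequality
\[
2\gamma\bigl(f(J_\gamma x)-f(y)\bigr)\le d^2(x,y)-d^2(x,J_\gamma x)-d^2(J_\gamma x,y)
\]
is exactly \cite[Lemma~2.5]{LeuSip17} (equivalently \cite[Lemma~1.6]{Bac13}), derived just as you describe by a geodesic variation combined with \eqref{def-CAT0}; the specialization $y=p$ gives the telescoping estimate, the specialization $y=x_n$ gives monotonicity of $(f(x_n))$, and the passage to $\theta^M$ is legitimate since $\theta^M\ge\theta$ and the partial sums of $(\gamma_n)$ are increasing, so $\theta^M$ is again a rate of divergence while making $\beta_{b,\theta}$ nondecreasing. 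Nothing is missing.
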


We define, 
for any $k \in \mathbb{N}$ and any $\gamma>0$,
\[
Argmin_k(\gamma f) := \left\{x \in X \bigm\vert \text{for all }y \in X\text{, } \gamma f(x) 
\leq \gamma f(y) + \frac 1 {k+1}\right\}. 
\]

We call an element of $Argmin_k(\gamma f)$ a $k$-{\it approximate minimizer} of $\gamma f$. In addition, we define 
a $k$-{\it approximate fixed point} of an operator $T:X\to X$ to be a point $x$ such that
$$d(x,Tx) \leq \frac1{k+1}.$$

The following result will be  needed  in Section~\ref{meta-tot-bound}.

\begin{proposition}\label{l-l}
Define
\begin{equation}
\psi:\mathbb{N}\to \mathbb{N}, \quad  \psi(k)=2(k+1)^2 -1. \label{appmin-def}
\end{equation}

Then, for all  $\gamma >0$ and all $k \in \mathbb{N}$, any $\psi(k)$-approximate 
minimizer of $\gamma f$ is a $k$-approximate fixed point of $J_\gamma$.
\end{proposition}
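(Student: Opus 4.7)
The plan is to compare the values of the function $f(x) + \frac{1}{2\gamma}d^2(x,\cdot)$ at the two natural points $x$ and $J_\gamma x$, and exploit that $J_\gamma x$ minimizes this functional. First, by definition of $J_\gamma$ applied with comparison point $y := x$, we obtain
\[
f(J_\gamma x) + \frac{1}{2\gamma}d^2(x, J_\gamma x) \;\leq\; f(x) + \frac{1}{2\gamma}d^2(x,x) \;=\; f(x),
\]
i.e., after multiplying by $\gamma$,
\[
\gamma f(J_\gamma x) + \tfrac{1}{2}d^2(x, J_\gamma x) \;\leq\; \gamma f(x).
\]

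Second, the assumption that $x$ is a $\psi(k)$-approximate minimizer of $\gamma f$, applied with the test point $J_\gamma x$, gives
\[
\gamma f(x) \;\leq\; \gamma f(J_\gamma x) + \frac{1}{\psi(k)+1} \;=\; \gamma f(J_\gamma x) + \frac{1}{2(k+1)^2},
\]
using $\psi(k)+1 = 2(k+1)^2$.

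Chaining the two inequalities, the $\gamma f(J_\gamma x)$ terms cancel, yielding
\[
\tfrac{1}{2}d^2(x, J_\gamma x) \;\leq\; \frac{1}{2(k+1)^2},
\]
and taking square roots gives $d(x, J_\gamma x) \leq \frac{1}{k+1}$, which is exactly the statement that $x$ is a $k$-approximate fixed point of $J_\gamma$. No real obstacle is expected: the entire argument is a two-line comparison, and the constant $2(k+1)^2$ in the definition of $\psi$ is precisely calibrated so that the factor $\tfrac{1}{2}$ coming from the quadratic term in the resolvent definition and the square root at the end combine to produce the desired $\frac{1}{k+1}$.
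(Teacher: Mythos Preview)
Your proof is correct and essentially identical to the paper's own argument: both use the defining minimality of $J_\gamma x$ (with comparison point $x$) together with the approximate-minimizer inequality for $x$ (with test point $J_\gamma x$), then cancel $\gamma f(J_\gamma x)$ and take a square root. The only difference is the order in which you present the two inequalities.
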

\begin{proof}
Let $x$ be a $\psi(k)$-approximate minimizer of $\gamma f$. Then we have that
$$ \gamma f(x) \leq \gamma f(J_\gamma x) + \frac1{2(k+1)^2}.$$
On the other hand, by the definition of $J_\gamma$,
$$ \gamma f(J_\gamma x) + \frac12 d^2(x,J_\gamma x) \leq \gamma f(x).$$
Putting these together, we immediately obtain that
$$d(x,J_\gamma x) \leq \frac1{k+1}.$$
\end{proof}

\section{ A rate of convergence for uniformly convex functions}\label{conv-unif}

Let us recall that, if $(X,d)$ is a metric space, then  a function  $f:X\to (-\infty,\infty]$ is  {\it uniformly convex} if there exists a nondecreasing function 
$\varphi : [0, \infty) \to [0, \infty]$  vanishing only at $0$ such that  for all $x,y \in X$ and 
all $t \in [0,1]$,
\begin{equation}
f((1-t)x + ty) \leq (1-t) f(x) + tf(y) - t(1-t)\varphi(d(x,y)). \label{uconv}
\end{equation}
Such a function $\varphi$ is called a {\it modulus of uniform convexity} of $f$. We also 
say that $f$ is uniformly convex with modulus $\varphi$.

The main result of this section is the following effective version of Theorem~\ref{ppa-cat0-strong} for 
uniformly convex functions. 

\begin{theorem}\label{quant-thm-uc}
Let $X$ be a complete CAT(0) space and $f : X \to (-\infty,\infty]$ be an  lsc proper function with $Argmin(f)\ne\emptyset$.  Assume, furthermore, that $f$ is 
uniformly convex with modulus $\varphi$. 

Suppose that $x\in X$  and $b>0$ are such that $d(x,p)\leq b$ for some minimizer $p$ of $f$ 
and that $(\gamma_n)$  is a sequence in $(0,\infty)$ such that $\sum_{n=0}^\infty \gamma_n = \infty$ with 
rate of divergence $\theta$.

Let  $(x_n)$ be the proximal point algorithm starting with $x$.
Define
$$\Sigma_{b,\theta,\varphi}:\mathbb{N}\to\mathbb{N}, \quad \Sigma_{b,\theta,\varphi}(k)= \beta_{b,\theta}\left(\left\lceil\frac{8}{\varphi
\left(\frac1{k+1}\right)}\right\rceil \right), $$
with $\beta_{b, \theta}$ given by \eqref{rate-fxn}.

Then $\Sigma_{b,\theta,\varphi}$ is a rate of convergence for  $(x_n)$ and the limit of this sequence is a minimizer of $f$.
\end{theorem}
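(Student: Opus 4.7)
The strategy is to bootstrap from the known quantitative convergence of the function values (Proposition~\ref{prop-limfxn-minf}) to quantitative convergence of the iterates, using uniform convexity as the bridge.

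First, I would observe that uniform convexity forces the minimizer $p$ to be unique: if $p,q \in Argmin(f)$, applying \eqref{uconv} at $t=1/2$ to the CAT(0) midpoint $m = \tfrac12 p + \tfrac12 q$ gives $f(m) \leq \min(f) - \tfrac14\vp(d(p,q))$, and since $f(m) \geq \min(f)$, the defining property of $\vp$ forces $d(p,q)=0$. Hence there is a unique candidate limit, and the hypothesis provides $p$ with $d(x,p) \leq b$.

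Next, the central estimate: for each $n$, apply \eqref{uconv} with $t=1/2$ to $x_n$ and $p$. Since $f(\tfrac12 x_n + \tfrac12 p) \geq \min(f) = f(p)$, rearranging yields
\[
\vp(d(x_n,p)) \;\leq\; 2\bigl(f(x_n) - \min(f)\bigr).
\]
This is the key inequality: once $f(x_n) - \min(f)$ is pushed below $\tfrac18 \vp\!\left(\tfrac1{k+1}\right)$, the right-hand side is at most $\tfrac14 \vp\!\left(\tfrac1{k+1}\right) < \vp\!\left(\tfrac1{k+1}\right)$, and since $\vp$ is nondecreasing and vanishes only at $0$, this forces $d(x_n,p) \leq \tfrac1{k+1}$ (contrapositive: $d(x_n,p) > \tfrac{1}{k+1}$ would give $\vp(d(x_n,p)) \geq \vp(\tfrac{1}{k+1})$).

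Finally, I plug in the rate from Proposition~\ref{prop-limfxn-minf}. To make $f(x_n) - \min(f) \leq \tfrac1{j+1}$ suffice for our bound, I pick $j+1 \geq 8/\vp(1/(k+1))$, i.e.\ $j = \lceil 8/\vp(1/(k+1))\rceil$; then for $n \geq \beta_{b,\theta}(j)$ the inequality $f(x_n) - \min(f) \leq \tfrac{1}{j+1} \leq \tfrac18 \vp\!\left(\tfrac{1}{k+1}\right)$ holds, and the chain above delivers $d(x_n,p) \leq \tfrac{1}{k+1}$. This is precisely $\Sigma_{b,\theta,\vp}(k)$. Since this bound is valid for every $k$, the sequence converges strongly to $p \in Argmin(f)$.

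I do not anticipate a serious obstacle: the midpoint estimate is elementary and the arithmetic bookkeeping with $\vp$ is routine, once one is careful that $\vp$ is only assumed nondecreasing (so we need a strict gap $\vp(d(x_n,p)) < \vp(1/(k+1))$ rather than equality) and that $f(x_n)$ is automatically finite for $n \geq 1$ because each resolvent step lands in the effective domain of $f$, making the estimate $\vp(d(x_n,p)) \leq 2(f(x_n) - \min(f))$ meaningful.
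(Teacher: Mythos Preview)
Your argument is correct and reaches the stated rate $\Sigma_{b,\theta,\varphi}$. It differs from the paper's proof in one structural respect worth noting.

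The paper does \emph{not} first isolate the unique minimizer; instead it applies the uniform convexity inequality at $t=\tfrac12$ to a pair of iterates $x_n,x_m$ with $n,m\geq \Sigma_{b,\theta,\varphi}(k)$, obtaining
\[
\tfrac14\,\varphi(d(x_n,x_m)) \;\leq\; \tfrac12\bigl(f(x_n)-\min(f)\bigr)+\tfrac12\bigl(f(x_m)-\min(f)\bigr),
\]
and then invokes Proposition~\ref{prop-limfxn-minf} to conclude $(x_n)$ is Cauchy with Cauchy modulus $\Sigma_{b,\theta,\varphi}$. Completeness then yields a limit $z$, the rate is obtained by letting $m\to\infty$, and the fact that $z\in Argmin(f)$ is supplied by citing Ba\v{c}\'ak's original convergence theorem. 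Your route---first establishing uniqueness of $p$ and then comparing $x_n$ directly to $p$---is more self-contained: the limit is identified from the outset as the unique minimizer, so no external appeal to Theorem~\ref{ppa-cat0-strong} is needed, and completeness of $X$ is not actually used. The paper's pairing of two iterates explains why the constant $8$ (rather than $4$) appears in $\Sigma_{b,\theta,\varphi}$; your argument matches the stated rate but would in fact tolerate a slightly smaller constant.
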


\begin{proof}
Let $k \in \mathbb{N}$ and $n,m \geq \Sigma_{b,\theta,\varphi}(k)$ be arbitrary. Applying \eqref{uconv} for $x_n$, $x_m$ and $\frac12$, 
we get that
\begin{equation*}
f\left(\frac{x_n+x_m}2\right) \leq  \frac12 f(x_n) + \frac12 f(x_m) - \frac14 \varphi(d(x_n,x_m)),
\end{equation*}
so 
\begin{align*}
\frac14 \varphi(d(x_n,x_m)) \leq &\ \frac12 f(x_n) + \frac12 f(x_m) - f\left(\frac{x_n+x_m}2\right) \\
\leq &\ \frac12 f(x_n) + \frac12 f(x_m) - \min(f).
\end{align*}
By Proposition~\ref{prop-limfxn-minf}, we have that $\displaystyle\lim_{n \to \infty} f(x_n) = \min(f)$, 
with rate of convergence $\beta_{b, \theta}$.   Using the definition of $\Sigma_{b,\theta,\varphi}$, we get that 
\[f(x_n) - \min(f), f(x_m) - \min(f)\leq \frac1{\left\lceil\frac{8}{\varphi\left(\frac1{k+1}\right)}\right\rceil + 1}.\]
It follows that 
\begin{align*}
\frac14 \varphi(d(x_n,x_m)) \leq &\ \frac1{\left\lceil\frac{8}{\varphi\left(\frac1{k+1}\right)}\right\rceil + 1} \leq 
\frac18\varphi\left(\frac1{k+1}\right)<\frac14\varphi\left(\frac1{k+1}\right).
\end{align*}
Since $\varphi$ is nondecreasing, we must have 
$$d(x_n,x_m) \leq \frac1{k+1}.$$
Thus, we have proved that $(x_n)$ is Cauchy with  $\Sigma_{b,\theta,\varphi}$ being a Cauchy modulus. 
Since $X$ is complete, $(x_n)$ is also convergent and let $z$ be its limit. Setting $m \to \infty$ in the above relation, we obtain that
$$d(x_n, z) \leq \frac1{k+1},$$
proving that $\Sigma_{b,\theta,\varphi}$ is indeed a rate of convergence for $(x_n)$.

Finally, by \cite[Remark 1.7]{Bac13} (see Theorem~\ref{ppa-cat0-strong}), the limit $z$ of $(x_n)$ is a minimizer of $f$.
\end{proof}

\section{A rate of metastability for totally bounded spaces}\label{meta-tot-bound}

Let $(X,d)$ be a metric space and $F\subseteq X$. The following notion, introduced in 
\cite{KohLeuNic18}, is essential for this section: an {\it approximation} of  $F$ is a  
family $(AF_k)_{k \in \mathbb{N}}$ of subsets of $X$ 
satisfying 
$$F = \bigcap_{k \in \mathbb{N}} AF_k \quad \text{ and } \quad AF_{k+1} \subseteq AF_k \text{~for all~} k\in\mathbb{N}.$$
Using such approximations, Kohlenbach, the first author and Nicolae developed in \cite{KohLeuNic18} 
very general methods that can be used to obtain quantitative forms, providing uniform rates of metastability,
of strong convergence results in compact spaces for sequences $(x_n)$ satisfying a general form 
of Fej\'er monotonicity. These methods have been already applied to the proximal point 
algorithm: in \cite{KohLeuNic18} with $F$ being the set of zeros of a maximally monotone operator 
in a Hilbert space and in 
\cite{LeuSip17} for the setting from this paper. The approximation used in \cite{LeuSip17} for 
$F:=Argmin(f)$ was defined as:
\[
AF_k := \left\{x \in X \mid \text{for all }i \leq k\text{, }d(x,J_{\gamma_i}x) \leq \frac1{k+1} \right\}.
\]
In this paper, we  use the following approximation of $F$, which most faithfully exhibits the meaning of the problem: 
\begin{align*}
AF_k := Argmin_k(f) =&\, \left\{x \in X \bigm\vert \text{for all }y \in X\text{, } 
f(x) \leq f(y) + \frac 1 {k+1}\right\}.
\end{align*}
Obviously, $Argmin_{k+1}(f)\subseteq Argmin_k(f)$  for every $k\in \mathbb{N}$ and 
$Argmin(f)=\bigcap_{k \in \mathbb{N}} Argmin_k(f)$.

We recall \cite{Ger08,KohLeuNic18} that a {\it modulus of total boundedness} for  a metric space 
$(X,d)$ is a function $\alpha : \mathbb{N} \to \mathbb{N}$ such that for any $k \in \mathbb{N}$ 
and any sequence ${(x_n)}$ in $X$,
\begin{center} there exist $i,j\in [0,\alpha(k)], i<j$ such that $d(x_i,x_j) \leq \frac1{k+1}$.
\end{center}
It is an easy exercise to verify that $X$ is totally bounded if and only if it has a modulus  of total boundedness.
Furthermore, it is well-known that a metric space is compact if and only if it is complete and totally bounded.

The main result of this section is an effective version of Theorem~\ref{ppa-cat0-strong} for totally bounded spaces.

\begin{theorem}\label{main-quant-thm}
Let $X$ be a totally bounded CAT(0) space with modulus $\alpha$ and $f : X \to (-\infty,\infty]$ be a 
convex lsc proper function with $F:=Argmin(f)\ne\emptyset$. 
Let $x\in X$  and $b>0$ be such that $d(x,p)\leq b$ for some minimizer $p$ of $f$.

Assume that  $(\gamma_n)$  is a sequence in $(0,\infty)$ such that $\sum_{n=0}^\infty \gamma_n = \infty$ with 
rate of divergence $\theta$ and $L:\mathbb{N}\to \mathbb{N}$  is a nondecreasing mapping satisfying $\displaystyle L(k)\geq \max_{0 \leq i \leq k} \gamma _i$ for all $k\in\mathbb{N}$.

Let  $(x_n)$ be the proximal point algorithm starting with $x$.
Define $\Psi_{b,\theta,L,\alpha}$ and $\Omega_{b,\theta,L,\alpha}$ as in Table \ref{tabel-1}. 
Then
\begin{enumerate}[(i)] 
\item $\Psi_{b,\theta,L,\alpha}$ is a rate of metastability for  ${(x_n)}$.
\item for all $k \in \mathbb{N}$ and all $g: \mathbb{N} \to \mathbb{N}$ there exists $N\leq \,\Omega_{b,\theta,L,\alpha}(k,g)$ such that
\[\!\!\!\!\!\!\!\!\!\!\!\!\!\forall i, j\!\in \![N,N+g(N)]\!\left(\!d(x_i,x_j) \leq \frac1{k+1}\!\right) \text{~and~}\, 
\forall m\geq N\!\left(\!x_m \leq \min(f) + \frac1{k+1}\!\right)\!.\]
\end{enumerate}
\end{theorem}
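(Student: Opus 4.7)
The plan is to instantiate the general Kohlenbach--Leu\c stean--Nicolae framework \cite{KohLeuNic18} for extracting rates of metastability from Fej\'er monotone sequences in totally bounded spaces, using the approximation $AF_k := Argmin_k(f)$ of $F = Argmin(f)$. That framework requires three ingredients: a uniform (quantitative) Fej\'er monotonicity modulus with respect to $(AF_k)$, a rate at which the iterates enter the approximations, and a modulus of total boundedness. The third is given as $\alpha$. The second is essentially immediate: a direct consequence of the variational definition of $J_{\gamma_n}$ is that $f(x_{n+1}) \leq f(x_n)$, so $(f(x_n))$ is nonincreasing, and combined with Proposition~\ref{prop-limfxn-minf} this yields $x_n \in AF_k$ for every $n \geq \beta_{b,\theta}(k)$.

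The first ingredient requires work, and is where Proposition~\ref{l-l} is used. I would produce an explicit $\chi$ such that for every $k, n, l \in \mathbb{N}$ and every $p \in AF_{\chi(k,n,l)}$,
\[ d(x_{n+m}, p) \leq d(x_n, p) + \frac{1}{k+1} \quad \text{for all } m \leq l. \]
By the telescoping estimate \eqref{dxnl-p} it suffices that $d(p, J_{\gamma_i} p) \leq \frac{1}{(k+1)l}$ for each $i \in [n, n+l-1]$. Any $p \in Argmin_K(f)$ is a $((K+1)/\gamma_i - 1)$-approximate minimizer of $\gamma_i f$, so by Proposition~\ref{l-l} we get $d(p, J_{\gamma_i} p) \leq \frac{1}{(k+1)l}$ as soon as $(K+1)/\gamma_i \geq 2((k+1)l)^2$, i.e.\ $K+1 \geq 2 \gamma_i (k+1)^2 l^2$. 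Bounding $\gamma_i \leq L(n+l-1)$ then yields the closed form $\chi(k,n,l) := 2 L(n+l-1)(k+1)^2 l^2 - 1$.

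With $\chi$ and $\beta_{b,\theta}$ at hand, the metastability argument proceeds in the standard KLN manner. Given $k$ and $g$, pick a sufficiently large $K$ and set $N_0 := \beta_{b,\theta}(K)$, so that every $x_n$ with $n \geq N_0$ lies in $AF_K$. Iterate $\tilde g(n) := n + g(n) + 1$ to produce checkpoints $N_0 < N_1 < \cdots < N_{\alpha(4k+3)}$; the modulus of total boundedness then provides $s < t \leq \alpha(4k+3)$ with $d(x_{N_s}, x_{N_t}) \leq \frac{1}{4(k+1)}$. Instantiating the uniform Fej\'er lemma with $p := x_{N_t} \in AF_K$, base index $N_s$ and range $g(N_s)$ (observing that $N_s + g(N_s) < N_{s+1} \leq N_t$) delivers $d(x_m, x_{N_t}) \leq \frac{1}{2(k+1)}$ for every $m \in [N_s, N_s + g(N_s)]$, and a triangle inequality then yields $d(x_i, x_j) \leq \frac{1}{k+1}$ for all $i,j \in [N_s, N_s + g(N_s)]$, establishing metastability at $N := N_s$. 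For part (ii), enlarging $N_0$ to also dominate $\beta_{b,\theta}(k)$ suffices so that $f(x_m) \leq \min(f) + \frac{1}{k+1}$ for every $m \geq N$, by the monotonicity of $(f(x_n))$.

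The main obstacle is the choice of $K$: it must be large enough that $\chi(4k+3, N_s, g(N_s)) \leq K$ for every admissible $s$, even though $N_s$ itself is produced by the $\alpha(4k+3)$-fold iterate of $\tilde g$ starting from $\beta_{b,\theta}(K)$. This nested dependence of $K$ on itself (through $\beta_{b,\theta}$, $L$, $g$ and $\alpha$) is the principal technical issue, and is discharged by the explicit closed-form bounds $\Psi_{b,\theta,L,\alpha}$ and $\Omega_{b,\theta,L,\alpha}$ recorded in Table~\ref{tabel-1}; writing these out is elementary but intricate primitive-recursive bookkeeping.
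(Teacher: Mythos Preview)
Your proposal is correct and follows essentially the same approach as the paper: verify the three hypotheses of the Kohlenbach--Leu\c stean--Nicolae framework (uniform Fej\'er modulus $\chi_L$ via Proposition~\ref{l-l} and \eqref{dxnl-p}, approximate-$F$-point bound $\beta_{b,\theta}$ via Proposition~\ref{prop-limfxn-minf}, and the given total boundedness modulus $\alpha$), then invoke \cite[Theorem 5.1]{KohLeuNic18} for (i) and \cite[Theorem 5.8(ii), Corollary 5.9]{KohLeuNic18-arxiv} for (ii). Two minor remarks: your Fej\'er modulus $2L(n+l-1)(k+1)^2 l^2-1$ differs slightly from the paper's $\chi_L(n,m,r)=2L(n+m-1)(m(r+1)+1)^2-1$ (the paper uses $m(r+1)+1$ to secure a strict inequality, as required by the KLN definition), and the paper simply cites the KLN theorems rather than re-sketching their proof---your informal description of the iterative construction with a single upfront $K$ is not quite how the recursion in Table~\ref{tabel-1} actually works, but since you ultimately defer to those closed-form bounds this does not affect correctness.
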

\begin{proof}
In the following claims, we show that the necessary moduli can be computed in our setting. We refer to \cite{KohLeuNic18} for the terminology. \\[2mm]
\noindent {\bf Claim 1:} $(x_n)$ is uniformly Fej\'er monotone with respect to $F$ with modulus $\chi_L$ defined in Table~\ref{tabel-1}, that is:
for all $n,m,r \in \mathbb{N}$, 
\begin{center}
 $q\in Argmin_{\chi_L(n,m,r)}(f)$ \quad implies\quad $d(x_{n+l},q) < d(x_n,q) + \frac1{r+1}$ for all $l \leq m$.
\end{center}
{\bf Proof of claim:} Let $n,m,r \in \mathbb{N}$, $q$ as in the hypothesis and $l \leq m$. 
By \eqref{dxnl-p}, we have that
\begin{equation}\label{fejer-eq1}
d(x_{n+l},q) \leq d(x_n, q) + \sum_{i=n}^{n+l-1} d(q, J_{\gamma_i} q)\leq d(x_n, q) +
\sum_{i=n}^{n+m-1} d(q, J_{\gamma_i}q).
\end{equation}
Let $i \in \{n,...,n+m-1\}$. We get that
\begin{align*} 
\gamma_i f(q) \leq &\, \gamma_i f(y) + \frac{\gamma_i}{\chi_L(n,m,r)+1} \quad \text{since~} q \in Argmin_{\chi_L(n,m,r)}(f)\\
\leq &\, \gamma_i f(y) + \frac{L(n+m-1)}{\chi_L(n,m,r)+1} \quad \text{by the definition of~} L\\
= &\, \gamma_i f(y) + \frac{1}{2(m(r+1)+1)^2} \quad \text{by the definition of~}\chi_L. 
 \end{align*}
Thus, $q$ is a $\psi(m(r+1))$-approximate fixed point of $\gamma_if$, where $\psi$ is defined by \eqref{appmin-def}.
We can apply Proposition~\ref{l-l} for $\gamma_i$ to  obtain that $q$ is an $m(r+1)$-approximate 
fixed point of $J_{\gamma_i}$, that is
$$d(q, J_{\gamma_i}q) \leq \frac 1{m(r+1)+1}.$$

It follows that 
$$\sum_{i=n}^{n+m-1} d(q, J_{\gamma_i}q) \leq \frac{m}{m(r+1)+1} < \frac1{r+1}.$$
By \eqref{fejer-eq1}, the claim follows. \hfill $\blacksquare$\\[1mm]

\noindent {\bf Claim 2:} $(x_n)$ is asymptotically regular with respect to $F$, with a rate of asymptotic regularity
$\beta_{b, \theta}$ defined by \eqref{rate-fxn}, that is: 
\[\forall k\in\mathbb{N}\, \forall n\geq \beta_{b, \theta}(k)\,\,\left(x_n\in Argmin_k(f)\right).\]
{\bf Proof of claim:} Let $k\in\mathbb{N}$. Applying Proposition~\ref{prop-limfxn-minf}, 
we get that for all $n\geq \beta_{b, \theta}(k)$, 
\[f(x_n)\leq \min(f)+\frac1{k+1}\leq f(y)+\frac1{k+1} \quad \text{for all~} y\in X. \tag*{$\blacksquare$}\]

\noindent {\bf Claim 3:}  $(x_n)$ has $F$-approximate points, with $\beta_{b, \theta}$ being an approximate $F$-point bound, that is: 
\[\forall k\in\mathbb{N}\, \exists N\leq \beta_{b, \theta}(k)\,\,\left(x_N\in Argmin_k(f)\right).\]
{\bf Proof of claim:} It is an obvious consequence of the second claim. \hfill $\blacksquare$\\[2mm]
Now we can finish the proof of the theorem.
\begin{enumerate}[(i)] 
\item We apply \cite[Theorem 5.1]{KohLeuNic18}, based on the first and the third claims. Using the notations 
from \cite{KohLeuNic18}, $\alpha_G:=id_{\mathbb{R}_+}, \beta_H:=id_{\mathbb{R}_+}$, $\gamma:=\alpha$, hence $P=\alpha(4k+3)$,
and, furthermore,
\begin{align*} 
\chi_{g,L}(n,r)=&\, \chi_L(n,g(n),r)= 2L(n+g(n)-1)(g(n)(r+1)+1)^2-1\\
\chi_{g,L}^M(n,r)=&\, \max_{i \leq n} \chi_{g,L}(i,r)=\max_{i \leq n} 2L(i+g(i)-1)(g(i)(r+1)+1)^2-1.
 \end{align*}
\item As pointed out in \cite[p. 23]{KohLeuNic18} and detailed in \cite{KohLeuNic18-arxiv} (the preprint version 
of \cite{KohLeuNic18}), we can get (ii) as an application of \cite[Theorem 5.8.(ii) and Corollary 5.9]{KohLeuNic18-arxiv}, 
based on the second claim and the following facts:
\begin{enumerate}
\item the rate of asymptotic regularity $\beta_{b,\theta}$ satisfies $\beta_{b,\theta}^M = \beta_{b,\theta}$.
\item $L^M=L$ (as $L$ is nondecreasing), hence, using the notation from 
\cite[Corollary 5.9]{KohLeuNic18-arxiv}, we have that $\chi_L^M=\chi_L$. 
\end{enumerate}
\end{enumerate}
\end{proof}

\begin{table}[ht!]
\begin{center}
\scalebox{0.95}{\begin{tabular}{ | l |}
\hline\\[1mm]
$\Psi_{b,\theta,L,\alpha}:\mathbb{N}\times \mathbb{N}^\mathbb{N}\to \mathbb{N}$, \quad $(\Psi_0)_{b,\theta,L}:\mathbb{N}\times\mathbb{N}\times \mathbb{N}^\mathbb{N}\to \mathbb{N}$\\[2mm]
$\Psi_{b,\theta,L,\alpha}(k,g)=(\Psi_0)_{b,\theta,L}(\alpha(4k+3),k,g)$\\[2mm]
$(\Psi_0)_{b,\theta,L}(0,k,g)=0$ \\[2mm]
$(\Psi_0)_{b,\theta,L}(n+1,k,g)=
\beta_{b,\theta}\big(\chi_{g,L}^M((\Psi_0)_{b,\theta,L}(n,k,g),4k+3)\big)$\\[2mm]
$\chi_L:\mathbb{N}\times\mathbb{N}\times\mathbb{N}\to \mathbb{N}, \quad \chi_L(n,m,r)= 2L(n+m-1)(m(r+1)+1)^2-1$\\[2mm]
$\chi_{g,L}^M:\mathbb{N}\times\mathbb{N} \to \mathbb{N}, \quad \chi_{g,L}^M(n,r)=\max_{i \leq n} 2L(i+g(i)-1)(g(i)(r+1)+1)^2-1$\\[5mm]
$\Omega_{b,\theta,L,\alpha} : \mathbb{N}\times \mathbb{N}^\mathbb{N}\to \mathbb{N}$, \quad $\Omega_{b,\theta,L,\alpha}(k,g)= 
\Psi^*_{b,\theta,L,\alpha}(k,g_{\beta_{b,\theta}(k)}) + \beta_{b,\theta}(k)$\\[2mm]
$\Psi^*_{b,\theta,L,\alpha}:\mathbb{N}\times \mathbb{N}^\mathbb{N}\to \mathbb{N}$, \quad 
$\Psi^*_{b,\theta,L,\alpha}(k,g)=\Psi_{b,\theta,L,\alpha^M}(k,g)$\\[2mm]
$g_l:\mathbb{N}\to\mathbb{N}, \quad g_l(n)= g^M(n+l)+l$\\[2mm]
with $\beta_{b,\theta}$ given by \eqref{rate-fxn}\\[1mm]
\hline 
\end{tabular}}\\[1mm]
\caption{Definitions of $\Psi_{b,\theta,L,\alpha}$ and $\Omega_{b,\theta,L,\alpha}$} \label{tabel-1}
\end{center}
\end{table}

\end{document}